\newcommand{\Z}{\mathbb{Z}}
\newcommand{\Complex }{\mathbb{C}}
\newcommand{\Sym}{\text{Sym}}
\newcommand{\sgn}{\text{sgn}}
\newcommand{\Symgrp}[1]{{\mathfrak{S}_{#1}}}
\newcommand{\g}[1]{{\mathfrak{g}_{#1}}}
\newcommand{\gl}[1]{{\mathfrak{gl}_{#1}\mathbb{C}}}
\newcommand{\GL}[1]{{\text{GL}_{#1}\mathbb{C}}}
\newcommand{\GZ}[1]{{\text{GZ}_{#1}}}
\newcommand{\gli}{{\mathfrak{gl}_{\infty}\mathbb{C}}}
\newcommand{\GLi}{{\text{GL}_{\infty}\mathbb{C}}}
\newcommand{\GZi}{{\text{GZ}_{\infty}}}
\numberwithin{equation}{section}
\theoremstyle{plain}
\newtheorem{lemma}{Lemma}[section]
\newtheorem{theorem}{Theorem}[section]
\newtheorem{proposition}{Proposition}[section]
\begin{document}
    \title[The Gelfand--Tsetlin basis for irreducible $\GLi$-modules]{The Gelfand--Tsetlin basis for irreducible representations of an infinite-dimensional general linear group}
    \author[E.~Movchan]{Evgenii Movchan}
    \email[Evgenii Movchan]{evgenymov@gmail.com}
    \keywords{asymptotic representation theory, Gelfand–-Tsetlin basis, infinite-dimensional general linear group}

    \begin{abstract}
        We consider the problem of constructing a Gelfand--Tsetlin basis in irreducible representations of an infinite-dimensional general linear group. For a finite-dimensional irreducible representation of a general linear group, all elements of the Gelfand--Tsetlin basis are parameterized by Gelfand--Tsetlin schemes. We extend this definition to infinite Gelfand--Tsetlin schemes, which in turn parameterize elements of the Gelfand--Tsetlin basis of an irreducible representation of an infinite-dimensional complete linear group. Using the properties of colimits of representations with the highest weight, we present an explicit form of the Gelfand--Tsetlin basis.
    \end{abstract}

    \maketitle
    
    \pagenumbering{arabic}

    \begin{sloppypar}
        \section{Introduction}
        Let $\{ \g{j} \}_{j = 0} ^n$ be a family of finite-dimensional reductive complex Lie algebras, such, that $\g{0} = 0$. For this family, let's consider a chain of embeddings
\begin{equation}
    \begin{tikzcd}[column sep = 4em, row sep = 2em]
        \g{0} \arrow[hook]{r}{\iota_0} & \g{1} \arrow[hook]{r}{\iota_1} & \g{2} \arrow[hook]{r}{\iota_2} & \cdots \arrow[hook]{r}{\iota_{n-1}} & \g{n}.
    \end{tikzcd}
    \label{equation:1.1}
\end{equation}

According to Weyl's theorem, any finite-dimensional complex representation $V$ of the algebra $\g{n}$ is completely reducible. In other words, it has a unique decomposition up to isomorphism

\begin{equation}
    V \cong \bigoplus \limits_\lambda {V^\lambda}^ {\oplus a_\lambda},
    \label{equation:1.2}
\end{equation}
where $V^\lambda$ are irreducible finite-dimensional complex representations of the algebra $\g{n}$, and $a_\lambda$ are some natural numbers.

We will call the numbers $a_\lambda$ the multiplicities of occurrence of the representation $V^\lambda$ in $V$. Any irreducible finite-dimensional complex representation $V^\lambda$ of the algebra $\g{n}$ is uniquely (up to isomorphism) determined by its highest weight $\lambda$. Using a chain of embeddings (\ref{equation:1.1}) for each such representation $V^\lambda$, considering its decomposition $V^\lambda \cong \bigoplus \limits_T V_T$ into irreducible $\g{0}$-modules and choosing in each one-dimensional subspace $V_T = \langle e_T \rangle$ a basis vector $e_T$, we can construct a natural basis $\{e_T\} \subset V^\lambda$, called the Gelfand-Tsetlin basis. The canonicity of this basis depends on the multiplicities $a_\lambda$ in the expansion (\ref{equation:1.2}). In particular, for the case of interest to us $\g{n} = \gl{n}$, all multiplicities $a_\lambda \equiv 1$ (see \cite{fulton1997young} ch.5). This basis was first constructed by Gelfand and Tsetlin in their papers \cite{cetlin1950finite, gelfand1950finite, gel1965finite}. Such natural basis turned out to be very useful both for pure mathematics and theoretical physics.

This theory was later developed by Zhelobenko, and in 1962, a method for constructing lowering operators for representations $\gl{n}$ was described in the work \cite{zhelobenko1962classical} (reducing operators first appeared in the work \cite{nagel1965operators}). Using this lowering operators, it was possible to construct a Gelfand--Tsetlin basis in all finite-dimensional complex representations of the classical Lie algebras $A_n$, $B_n$, $C_n$, $D_n$, which is described, for example, in the works \cite{molev2006, molev2021monomial}. The key property of these operators is the following: let $V^\lambda$ be an irreducible finite-dimensional complex representation of the reductive Lie algebra $\g{n}$. By Weyl's theorem, there is decomposition of $\text{Res}^\g{n}_\g{n-1} V^\lambda \cong \bigoplus \limits_\mu {V^\mu}^ {\oplus a_\mu}$ to irreducible $\g{n-1}$-modules. Reducing operators $z_{n i} \in U(\g{n})$, with index $1 \leq i \leq n-1$, define maps (under certain conditions on the weights)
\begin{equation}
    z_{n i}: V^\lambda \longrightarrow V^\lambda: z_{n i} \cdot v_\lambda = v_{\lambda - \delta_i},
    \label{equation:1.3}
\end{equation}
where $\lambda - \delta_i$ is the highest weight obtained from $\lambda$ by replacing the term $\lambda_i$ with $\lambda_{i+1}$, and $v_\lambda$ and $v_{\lambda - \delta_i}$ are highest weight vectors, respectively. By acting with these operators with different indices $i$ on the highest weight vector $v_\lambda$ of the representation $V^\lambda$, the entire Gelfand--Tsetlin basis of this representation can be obtained.

We will be interested in the algebra $\gli$, for which it is not clear what the phrase “irreducible $\mathfrak{gl}_{\infty - 1}\mathbb{C}$-module” means. In this case, the theory of lowering operators described above does not directly suit us. The observation inspired by the Yangian theory turns out to be productive (see \cite{molev2009yangians}). As shown in the works of Nazarov and Tarasov \cite{nazarov1994yangians, nazarov1998representations}, for a fixed algebra $\g{n}$ and a representation $V^\lambda$ there are operators $\{A_m (u)\}_{m = 1} ^n$, called quantum minors of the $L$-operator, such that the Gelfand--Tsetlin basis is their eigenvectors. In other words, all operators $\{A_m (u)\}_{m = 1} ^n$ act diagonally in the Gelfand--Tsetlin basis. Thus, knowing the operators $\{A_m(u)\}_{m=1}^n$ and their eigenvalues $\{\lambda_m(u)\}_{m=1}^n$, we can reduce the problem of constructing the Gelfand--Tsetlin basis to a spectral problem. Such an approach is convenient when the action of lowering operators on the highest weight vector is not defined in principle. This approach was used in the works \cite{valinevich2019construction, antonenko2022gelfand}, where the elements of the Gelfand--Tsetlin basis were directly defined as eigenvectors of quantum minors $\{A_m(u)\}_{m=1}^n$. So, to construct the Gelfand--Tsetlin basis in irreducible representations of the algebra $\gli$, together with the well-known theory of lowering operators $z_{n i}$, we need the theory of quantum minors $A_m(u)$. Using these two approaches, it will be possible to reduce the problem to a finite-dimensional case. More information about these operators and the correctness of the objects they define will be written below.

This work has the following structure. In section 2, we note the general facts of the representation theory of the Lie algebra $\gl{n}$. In section 3, using the combinatorial Gelfand--Tsetlin schemes, an explicit formula of the Gelfand--Tsetlin basis in irreducible representations of the algebra $\gl{n}$ is given. Next, the infinite-dimensional group $\GLi$ and the algebra $\gli$ are formally defined, and the equivalence of their representations is noted. In section 4, an important concept of the Gelfand--Tsetlin algebra $\GZ{n}$ is introduced, with the help of which an alternative definition of the Gelfand--Tsetlin basis is given as a basis, in which the algebra $\GZ{n}$ acts diagonally. Finally, we formally define the infinite Gelfand--Tsetlin algebra $\GZi$. In section 5, the polynomial representations of the algebra $\gli$ are defined as solutions to a universal problem similar to the finite-dimensional case. Their existence and correct certainty are shown. In section 6, containing the main original results of this work, by analogy with the finite-dimensional case, we define infinite Gelfand--Tsetlin schemes, with the help of which we construct the Gelfand--Tsetlin basis in polynomial representations of the algebra $\gli$. In parallel, it is proved that the polynomial representations of the algebra $\gli$ are irreducible representations with highest weight. In the end, we note that the ideas used can be applied to construct the Gelfand--Tsetlin basis in irreducible representations with highest weight of the colimits of all classical Lie algebras.
    
        \section{Representation theory of general linear Lie algebra}
        As is known, all irreducible finite-dimensional complex representations (more precisely, their isomorphism classes) of the algebra $\gl{n}$ are in one-to-one correspondence with ordered sets $\lambda=(\lambda_1,\ldots,\lambda_n)$, called highest weights, such that
\begin{equation}
    \lambda_1 \geq \ldots \geq \lambda_n \qquad \text{and} \qquad  \forall i \in \{ 1, \ldots, n \} \implies \lambda_i \in \Z.
        \label{equation:2.1}
\end{equation}

Denote by $V^\lambda$ the irreducible $\gl{n}$-module indexed by the highest weight $\lambda$. Let $\lambda_m \geq 0$ and $\lambda_{m+1} \leq 0$, define partitions $\mu =\mu_1 \geq \ldots\geq \mu_m$ and $\nu =\nu_{m+1} \geq\ldots \geq\nu_n$  of the number $n$, such that $\lambda_1 = \mu_1, \ldots, \lambda_m =\mu_m$ and $\lambda_{m+1} = -\nu_{m+1}, \ldots,\lambda_{n} = -\nu_{n}$. Let's identify partitions and Young diagrams. Thus, any irreducible $\gl{n}$-module $V^\lambda$ is parameterized by a pair of Young diagrams $V^{\mu\nu} = V^\lambda$.

For $k\geq 0$, we define the determinant representation $D_k = (\bigwedge\nolimits^n\Complex^n)^{\otimes k}$ as the $k$-th tensor degree of the $n$-th wedge power of the tautological representation $\Complex^n$ of the algebra $\gl{n}$. For $k<0$, we define the determinant representation $D_k = D_{-k}^*$ as dual to $D_{-k}$. Using the determinant representation, it is not difficult to prove the following isomorphism of representations:
\begin{equation}
    V^{\mu \nu} \cong V^{\widetilde{\mu} \widetilde{\nu}} \otimes D_{-k} \qquad \text{for} \qquad  k - \nu_{m+1}\geq 0,
    \label{equation:2.2}
\end{equation}
where $\widetilde{\mu} = \lambda_1 + k \geq \ldots \geq \lambda_m + k$ and $\widetilde{\nu} = 0 \geq \ldots \geq 0 = 0$.

Representations $V^{\mu\nu}$, whose partitions $\nu=0$, are called polynomial representations. As can be seen from (\ref{equation:2.2}), any irreducible representation $V^{\mu\nu}$ is isomorphic to the tensor product of the polynomial and determinant representations. We will be interested in the basis in the representations $V^{\mu\nu}$, and the key observation here is that all determinant representations are one-dimensional, that is, $\text{dim} D_k = 1$ for all $k$. Thus, without loss of generality, for an arbitrary irreducible finite-dimensional $\gl{n}$-module $V^{\lambda}$, it can be assumed that $\lambda$ is a Young diagram (which is further always assumed). This observation is the motivation to consider only the irreducible polynomial representations of the algebra $\gli$.

For the algebra $\gl{n}$ and fixed Young diagram $\lambda$ of $m$ cells, we define the representation $(\Complex^n)^{\times\lambda}$ as the Cartesian product of $m$ copies of the tautological representation $\Complex^n$, indexed by the cells of the diagram $\lambda$ (see \cite{fulton1997young} ch.8). Thus, the elements of the representation $(\Complex^n)^{\times\lambda}$ are diagrams $\lambda$, in each cell of which an element from $\Complex^n$ is written. For example, for $\lambda = (2,2,1)$, an arbitrary element $w\in ( \Complex^n )^{\times\lambda}$ has the form
\begin{equation}
\ytableausetup{mathmode, boxframe=normal, boxsize=2em}
    w = 
    \begin{ytableau}
        v_1 & v_2 \\
        v_3 & v_4 \\
        v_5
    \end{ytableau} \text{~},
    \label{equation:2.3}
\end{equation}
where $v_i$ are some elements from $\Complex^n$.

Let's call the map $f: ( \Complex^n )^{\times\lambda}\xrightarrow{\quad} F$ into some vector space $F$ symmetrizing if it satisfies the conditions:
\begin{align}
    &\text{1) $f$ -- multilinear}, \label{equation:2.4} \\
    &\text{2) $f$ -- skew-symmetric across elements in the same column}, \label{equation:2.5} \\
    &\text{3) $\forall w \in ( \Complex^n )^{\times \lambda} \implies f(w) = \sum\limits_{w'}f(w')$}, \label{equation:2.6}
\end{align}
where summation is implied by those $w'\in ( \Complex^n )^{\times\lambda}$ resulting from the $w$ from exchange between two fixed columns, with the selected subset of cells in the right of the selected columns. For example, for $\lambda = (2,2,1)$, selecting the entire right column, we get
\begin{equation}
    f\Biggl( \text{~}
    \begin{ytableau}
        v_1 & v_2 \\
        v_3 & v_4 \\
        v_5
    \end{ytableau} \text{~} \Biggl)
    = 
    f\Biggl( \text{~}
    \begin{ytableau}
        v_2 & v_1 \\
        v_4 & v_3 \\
        v_5
    \end{ytableau} \text{~} \Biggl)
    +
    f\Biggl( \text{~}
    \begin{ytableau}
        v_1 & v_3 \\
        v_2 & v_5 \\
        v_4
    \end{ytableau} \text{~} \Biggl)
    +
    f\Biggl( \text{~}
    \begin{ytableau}
        v_2 & v_1 \\
        v_3 & v_5 \\
        v_4
    \end{ytableau} \text{~} \Biggl).
    \label{equation:2.7}
\end{equation}

The polynomial representations $V^\lambda$ of the algebra $\gl{n}$ are solutions to the following universal problem: if $\Phi_{uni} : (\Complex^n)^{\times\lambda}\xrightarrow{\quad} V^\lambda$ is a symmetrizing map, such that for any symmetrizing map $\Phi:(\Complex^n)^{\times\lambda}\xrightarrow{\quad}F$, there is a single map $\varphi : V^\lambda\xrightarrow{\quad}F$, such that $\Phi=\varphi\circ \Phi_{uni}$. In other words, for any $\Phi$ and $F$ as above, there is a single map $\varphi$, that makes the following diagram commutative
\begin{equation}
    \begin{tikzcd}[column sep = 4em, row sep = 4em]
        ( \Complex^n )^{\times \lambda} \arrow{r}{\Phi_{uni}} \arrow[swap]{dr}{\Phi} & V^\lambda \arrow[dashrightarrow]{d}{\varphi} & V^\lambda \arrow{d}{\rho}\\
        & F, & \mathbb{S}_\lambda ( \Complex^n ),
    \end{tikzcd}
    \label{equation:2.8}
\end{equation}
where $\rho : V^\lambda \xrightarrow{\quad} \mathbb{S}_\lambda ( \Complex^n )$ is an isomorphism of representations, and $\mathbb{S}_\lambda ( \Complex^n )$ is a Schur functor.
        
        \section{An infinite-dimensional general linear group and its Lie algebra}
        As mentioned in the introduction, the Gelfand--Tsetlin basis was constructed by Gelfand and Tsetlin for all irreducible finite-dimensional $\gl{n}$-modules $V^\lambda$. The elements of this basis are naturally parameterized by combinatorial objects called Gelfand--Tsetlin schemes. For a fixed Young diagram $\lambda = \lambda_1\geq\ldots \geq\lambda_n$ the Gelfand--Tsetlin scheme is an ordered set of elements $\lambda_{i j}$
\begin{equation}
    \Lambda = \left[
    \begin{array}{ccccccccc}
    \lambda_{n 1} & & \lambda_{n 2} & & \cdots & & \cdots & & \lambda_{n n} \\
    & \lambda_{n-1,1} & & \lambda_{n-1,2} & & \cdots & & \lambda_{n-1, n-1} & \\
    & & \cdots & & \cdots & & \cdots & & \\
    & & & \lambda_{2 1} & & \lambda_{2 2} & & & \\
    & & & & \lambda_{1 1} & & & & \\
    \end{array}
    \right],
    \label{equation:3.1}
\end{equation}
where for all $2 \leq j\leq n$ and $1 \leq i \leq j - 1$ the following relations hold
\begin{equation}
    \lambda_{i j} \in \mathbb{N}, \qquad \lambda_{n j} = \lambda_j, \qquad \lambda_{i j} \geq \lambda_{i - 1, j} \qquad \text{and} \qquad \lambda_{i - 1, j} \geq \lambda_{i, j + 1}.
    \label{equation:3.2}
\end{equation}

As it was said, the set of $\Lambda_n (\lambda)$ of all Gelfand--Tsetlin schemes of the form $\lambda$ of the algebra $\gl{n}$ sets the natural parametrization of the Gelfand--Tsetlin basis in the irreducible $\gl{n}$-module $V^\lambda$. Thus, for each Gelfand--Tsetlin scheme $\Lambda$ is assigned an element $e_T \in V^\lambda$ of the Gelfand--Tsetlin basis, hereinafter referred to as $e_\Lambda$. Using Gelfand--Tsetlin schemes and lowering operators, it is possible to write the Gelfand--Tsetlin basis of the representation $V^\lambda$ explicitly (see~\cite{zhelobenko1962classical}). So, for a fixed Gelfand--Tsetlin scheme $\Lambda$, the element $e_\Lambda$ of the Gelfand--Tsetlin basis has the form
\begin{equation}
    e_\Lambda = \prod\limits_{2 \leq k \leq n}^{\longrightarrow} \prod\limits_{i = 1}^{k-1} z_{k i}^{\lambda_{k i} - \lambda_{k-1, i}} \cdot v_\lambda,
    \label{equation:3.3}
\end{equation}
where the terms in the ordered product are ordered according to the increasing index $k$, $v_\lambda$ is the highest weight vector of the representation $V^\lambda$, and the lowering operators $z_{k i}\in U(\gl{n})$ have the form
\begin{equation}
    z_{k i} = \sum\limits_{i < i_1 < \ldots < i_p < k} E_{i_1 i} \cdot E_{i_2 i_1} \cdot \ldots \cdot E_{i_p i_{p-1}} \cdot E_{k i_p} \cdot (E_{i i} - E_{j_1 j_1} + j_1 - i) \cdot \ldots \cdot (E_{i i} - E_{j_q j_q} + j_q - i),
    \label{equation:3.4}
\end{equation}
where the sum is calculated over all $p\in\mathbb{N}$, and the set $\{j_1, \ldots, j_q\}$ is the complement to the subset $\{i_1,\ldots, i_p\}$ in the set $\{i+1, \ldots,k-1 \}$.

Next, we will use an explicit formula for the Gelfand--Tsetlin basis (\ref{equation:3.3}) in an irreducible representation of the algebra $\gl{n}$ to construct the Gelfand--Tsetlin basis in irreducible representations of the algebra $\gli$, but for now let's define how we understand the group $\GLi$ and the algebra $\gli$. To do this, let's consider a commutative diagram
\begin{equation}
    \begin{tikzcd}[column sep = 4em, row sep = 4em]
        0 \arrow[hook]{r}{d_e \iota_0} & \gl{1} \arrow[hook]{r}{d_e \iota_1} \arrow{d}{\exp} & \gl{2} \arrow[hook]{r}{d_e \iota_2} \arrow{d}{\exp} & \cdots \arrow[hook]{r}{d_e \iota_{n-1}} & \gl{n} \arrow[hook]{r}{d_e \iota_n} \arrow{d}{\exp} & \cdots \\
        0 \arrow[hook]{r}{\iota_0} & \GL{1} \arrow[hook]{r}{\iota_1} & \GL{2} \arrow[hook]{r}{\iota_2} & \cdots \arrow[hook]{r}{\iota_{n-1}} & \GL{n} \arrow[hook]{r}{\iota_n} & \cdots,
    \end{tikzcd}
    \label{equation:3.8}
\end{equation}
where $\exp$ is an exponential map, $\iota_{n-1} : \GL{n-1} \xhookrightarrow{\quad} \GL{n}: A \mapsto
\begin{pmatrix}
A & \vline & 0 \\
\hline
0 & \vline & 1
\end{pmatrix}
$ -- injection, and $d_e\iota_{n-1}$ is its differential in identity $e$.

Colimit of the bottom chain of embeddings of the diagram (\ref{equation:3.8}) we will understand as the group $\GLi =\varinjlim\GL{n}$, and the copy of the upper chain of embeddings as the algebra $\gli=\varinjlim\gl{n}$, respectively. The group $\GLi$ and the algebra $\gli$ can be perceived as a set of infinite matrices in which almost all elements are equal to zero, and in the case of a group, almost all elements on the diagonal are equal to one. In other words, the following equalities take place
\begin{equation}
    \GLi = \{ (a_{i j})_{(i, j) \in {\mathbb{N}^*}^2} ~|~ \forall (i, j) \in {\mathbb{N}^*}^2 \exists N \in {\mathbb{N}^*}: \text{if } i+j > N \implies a_{i j} = \delta_{i j}\},
    \label{equation:3.9}
\end{equation}
\begin{equation}
    \gli = \{ (a_{i j})_{(i, j) \in {\mathbb{N}^*}^2} ~|~ \forall (i, j) \in {\mathbb{N}^*}^2 \exists N \in {\mathbb{N}^*}: \text{if } i+j > N \implies a_{i j} = 0\},
    \label{equation:3.10}
\end{equation}
where ${\mathbb{N}^*}$ is the set of natural numbers without zero, and $\delta_{i j}$ is the Kronecker symbol.

Note that usually an infinite-dimensional general linear group and the symbol $\GLi$ are understood to be a group whose elements are infinite in both directions, that is, they have the form $(a_{i j})_{(i, j)\in {\mathbb{Z}}^2}$ (see \cite{kac2013bombay} ch.4). Thus, it would be more correct for us to use the notation ${\text{GL}_{\infty/2}\mathbb{C}}$ and ${\mathfrak{gl}_{\infty/2}\mathbb{C}}$. However, ignoring the possible confusion, we will use the above notation $\GLi$ and $\gli$.

Since $\GL{n}$ is a connected Lie group, any irreducible representation $V^\lambda$ is also an irreducible representation of its Lie algebra $\gl{n}$ and vice versa, and the following diagram is commutative
\begin{equation}
    \begin{tikzcd}[column sep = 4em, row sep = 4em]
            \gl{n} \arrow{r}{d_e \rho_\lambda} \arrow[swap]{d}{\exp} & \mathfrak{gl}(V^\lambda) \arrow{d}{\exp} \\
        \GL{n} \arrow{r}{\rho_\lambda} & \text{GL}(V^\lambda),
    \end{tikzcd}
    \label{equation:3.13}
\end{equation}
where $\rho_\lambda$ is the homomorphism of the irreducible representation $V^\lambda$.

The Lie group $\GLi$ is connected as the colimit of connected Lie groups $\GL{n}$. Indeed, let's consider arbitrary elements $A, B\in\GLi$. We want to show that there is a smooth path $\gamma : [0,1]\xrightarrow{\quad}\GLi$, such that $\gamma(0)= A$ and $\gamma(1)=B$. By definition of the colimit, for the elements $A$ and $B$ there is a smooth map $\phi_n : \GL{n}\xrightarrow{\quad}\GLi$ and the elements $\widetilde{A}, \widetilde{B}\in\GL{n}$, which are preimages of the elements $A$ and $B$. In other words, $\phi_n(\widetilde{A}) = A$ and $\phi_n(\widetilde{B}) = B$. But the Lie group $\GL{n}$ is path-connected, which means that there is a smooth path $\widetilde{\gamma} : [0,1]\xrightarrow{\quad}\GL{n}$, such that $\widetilde{\gamma}(0)=\widetilde{A}$ and $\widetilde{\gamma}(1) = \widetilde{B}$. Let's define the path $\gamma : [0,1] \xrightarrow{\quad} \GLi$ as a composition $\gamma = \phi_n \circ\widetilde{\gamma}$. Thus, all elements of the Lie group $\GLi$ are connected with a smooth path, which means that the group $\GLi$ is path-connected as a smooth manifold, thus connected.

The commutativity of diagrams (\ref{equation:3.8}) and (\ref{equation:3.13}) induces the commutativity of the colimit diagram
\begin{equation}
    \begin{tikzcd}[column sep = 4em, row sep = 4em]
        \gli \arrow{r}{d_e \rho} \arrow[swap]{d}{\exp} & \mathfrak{gl}(V) \arrow{d}{\exp} \\
        \GLi \arrow{r}{\rho} & \text{GL}(V),
    \end{tikzcd}
    \label{equation:3.14}
\end{equation}
where $V$ is some irreducible representation of $\GLi$, and $\rho$ is its homomorphism. 

Thus, speaking of irreducible $\gli$ modules, without loss of generality, we can consider only irreducible $\gli$ modules.

        \section{An infinite-dimensional Gelfand--Tsetlin algebra and the quantum minors of the L-operator}
        In addition to the usual definition of the Gelfand--Tsetlin basis of the irreducible representation $V^\lambda$ of the algebra $\gl{n}$ through the restriction on $\gl{0}$-modules described in the introduction, it is possible to define the Gelfand--Tsetlin basis as the basis on which some algebra acts diagonally. We will be interested in the second definition, because it is much more convenient to work with colimits with it. An algebra acting diagonally on the Gelfand--Tsetlin basis is called the Gelfand--Tsetlin algebra $\GZ{n}$ of the algebra $\gl{n}$. In our case, the Gelfand--Tsetlin algebra $\GZ{n}$ is some subalgebra of the universal enveloping algebra $U(\gl{n})$. Consider the chain of embeddings of universal enveloping algebras $U(\gl{n})$ induced by the chain (\ref{equation:3.8}) of algebras $\gl{n}$
\begin{equation}
    \begin{tikzcd}[column sep = 3.74em, row sep = 2em]
        0 \arrow[hook]{r}{\iota^*_0} & U(\gl{1}) \arrow[hook]{r}{\iota^*_1} & U(\gl{2}) \arrow[hook]{r}{\iota^*_2} & \cdots \arrow[hook]{r}{\iota^*_{n-1}} & U(\gl{n}) \arrow[hook]{r}{\iota^*_n} & \cdots,
    \end{tikzcd}
    \label{equation:4.1}
\end{equation}
where $\iota^*_{n-1} : U(\gl{n-1}) \xhookrightarrow{\quad} U(\gl{n}): E_{i_1 j_1} \cdot \ldots \cdot E_{i_s j_s} \mapsto \iota^*_{n-1} (E_{i_1 j_1} \cdot \ldots \cdot E_{i_s j_s}) = d_e \iota_{n-1}(E_{i_1 j_1}) \cdot \ldots \cdot d_e \iota_{n-1}(E_{i_s j_s})$ -- injection.

For the algebra $\gl{n}$ of the chain (\ref{equation:4.1}) the Gelfand--Tsetlin algebra $\GZ{n}\subset U(\gl{n})$, generated by all centers of universal enveloping algebras $U(\gl{k})$, where $0\leq k\leq n$, is called the Gelfand--Tsetlin subalgebra. In other words,
\begin{equation}
    \GZ{n} = \langle Z( U(\gl{1}) ), \ldots, Z( U(\gl{n}) )\rangle,
    \label{equation:4.2}
\end{equation}
where $Z(U(\gl{k}))$ is the center of the universal enveloping $U(\gl{k})$.

As mentioned above, a remarkable property of the Gelfand--Tsetlin algebra $\GZ{n}$ is that it acts diagonally on the Gelfand--Tsetlin basis $\{e_\Lambda\}_{\lambda\in\Lambda_n(\lambda)}$ of some irreducible $\gl{n}$-module $V^\lambda$. This property is the motivation for introducing a similar definition of the Gelfand--Tsetlin basis for irreducible $\gli$ modules. Now let's introduce the definition of the $L$-operator and its quantum minors, which is important for further discussion (see~\cite{molev2009yangians}). For a fixed algebra $\gl{n}$, an $L$-operator is a formal polynomial in the parameter $u$, called the spectral parameter, given by the formula
\begin{equation}
    L(u) = u + E, \qquad \text{где} \qquad E = 
    \left(
    \begin{array}{cccc}
    E_{1 1} & E_{1 2} & \ldots & E_{1 n}\\
    E_{2 1} & E_{2 2} & \ldots & E_{2 n}\\
    \vdots & \vdots & \ddots & \vdots\\
    E_{n 1} & E_{n 2} & \ldots & E_{n n}
    \end{array}
    \right).
    \label{equation:4.3}
\end{equation}

For the $L$-operator and the index $1\leq m\leq n$, the quantum minor is a formal polynomial in the spectral parameter $u$, given by the formula
\begin{align}
    A_m (u) = \sum\limits_{\sigma \in \mathfrak{S}_m} \sgn(\sigma) L(u)_{\sigma(1) 1} \cdot \ldots \cdot L(u - m + 1)_{\sigma(m) m},
    \label{equation:4.4}
\end{align}
where $\Symgrp{m}$ is the permutation group of $m$ elements, and $\sgn(\sigma)$ is the sign of the permutation $\sigma \in \Symgrp{m}$.

As can be seen from the definition, the quantum minor $A_m(u)$ is a polynomial of degree $m$, and can be represented as $A_m(u) =u^m +a_{m 1}u^{m-1} +\ldots+a_{m m}$. The operator $A_m(u)$ is the Capelli determinant of the algebra $\gl{m}$, so its coefficients $\{a_{m i}\}_{i=1}^m$ are generators of the center of the universal enveloping $U(\gl{m})$, that is $Z(U(\gl{m})) = \langle \{ a_{m i} \}_{i=1}^m\rangle $. Therefore, the Gelfand--Tsetlin algebra $\GZ{n}$ is generated by all coefficients $\{a_{m i}\}_{1\leq i\leq m\leq n}$. So, for any index $1\leq m\leq n$, the quantum minors $A_m(u)$ act diagonally in the Gelfand--Tsetlin basis $\{e_\Lambda\}_{\lambda\in\Lambda_n(\lambda)}$ of irreducible representation $V^\lambda$ of algebra $\gl{n}$. Namely, there is a formula
\begin{equation}
    A_m (u) \cdot e_\Lambda = \prod\limits_{i=1}^{m}(u + \lambda_{m i} - i + 1) e_\Lambda.
    \label{equation:4.5}
\end{equation}

This property uniquely (up to multiplication by scalar) defines the Gelfand--Tetlin basis $\{e_\Lambda\}_{\lambda \in\Lambda_n(\lambda)}$ in the irreducible representation of $V^\lambda$. Thus, the Gelfand--Tsetlin basis can be defined as a solution to the spectral problem (\ref{equation:4.5}), without using lowering operators. This approach can be used in cases where the action of lowering operators on the highest weight vector is not defined in principle (see~\cite{valinevich2019construction, antonenko2022gelfand}).

Let's return to the chain (\ref{equation:4.1}). Note that for any $n\in\mathbb{N}^*$, the center of the universal enveloping $U(\gl{n-1})$ is embedded in the center of the universal enveloping $U(\gl{n})$, that is, $\iota^*_{n-1} ( Z( U(\gl{n-1})) ) \subset Z(U(\gl{n}))$. Thus, we obtain a well-defined chain of embeddings of Gelfand--Tsetlin algebras, shown in the following commutative diagram
\begin{equation}
    \begin{tikzcd}[column sep = 4em, row sep = 4em]
        0 \arrow[hook]{r}{\iota^*_{0}} & \GZ{1} \arrow[hook]{r}{\iota^*_{1}} \arrow[hook]{d}{\text{id}_\GZ{1}} & \GZ{2} \arrow[hook]{r}{\iota^*_{2}} \arrow[hook]{d}{\text{id}_\GZ{2}} & \cdots \arrow[hook]{r}{\iota^*_{n-1}} & \GZ{n} \arrow[hook]{r}{\iota^*_{n}} \arrow[hook]{d}{\text{id}_\GZ{n-1}} & \cdots \\
        0 \arrow[hook]{r}{\iota^*_{0}} & U(\gl{1}) \arrow[hook]{r}{\iota^*_{1}} & U(\gl{2}) \arrow[hook]{r}{\iota^*_{2}} & \cdots \arrow[hook]{r}{\iota^*_{n-1}} & U(\gl{n}) \arrow[hook]{r}{\iota^*_{n}} & \cdots,
    \end{tikzcd}
    \label{equation:4.6}
\end{equation}

Using the upper chain of embeddings of algebras $\GZ{n}$ of the diagram (\ref{equation:4.6}), it is possible to correctly define the infinite Gelfand--Tsetlin algebra $\GZi$. So, we will understand the copy of the upper chain as the algebra $\GZi=\varinjlim\GZ{n}$. It is easy to see that the Gelfand--Tsetlin algebra $\GZi$ is generated by all coefficients of all quantum minors $A_m(u)$, that is, $\GZi=\langle\{a_{m i}\}_{1\leq i\leq m\leq\infty}\rangle$. Thus, the spectral problem (\ref{equation:4.5}) can also be put on a basis in irreducible $\gli$ modules. Below, using the infinite Gelfand-Cetlin algebra $\GZi$, we, by analogy with the algebra $\gl{n}$, define the Gelfand--Tsetlin basis in irreducible representations of the algebra $\gli$.

        \section{Polynomial representations of infinite-dimensional general linear Lie algebra}
        Let's proceed to the consideration of the algebra $\gli$ that interests us. First of all, we will limit the class of irreducible representations we are considering. Namely, similarly to the finite-dimensional case, we define the highest weight as an ordered set $\lambda = (\lambda_1,\lambda_2,\ldots)$ such that
\begin{equation}
    \lambda_1 \geq \lambda_2 \geq \ldots \qquad \text{and} \qquad  \forall i \in \mathbb{N}^* \implies \lambda_i \in \mathbb{N},
    \label{equation:5.1}
\end{equation}
where there exists a number $N \in \mathbb{N}^*$, such that for any $n > N$ all $\lambda_n = 0$.

As in the case of $\gl{n}$, we identify all the highest weights with the corresponding Young diagrams. Denote by $\Complex^\infty$ the direct sum of all one-dimensional subspaces spanned by vectors $e_i$, in other words $\Complex^\infty =\bigoplus\limits_{i\in\mathbb{N}^*} \langle e_i\rangle$. Vectors $e_i$ can be perceived as columns of dimension $\infty\times 1$, in which there are zeros in each place, and one in the $i$-th place. In other words, $e_i = (\delta_{i j})_{j\in\mathbb{N}^*}$. Let's define the action of the algebra $\gli$ on the space $\Complex^\infty$ by the usual multiplication of a vector by a matrix, that is, $E_{i j}\cdot e_k = \delta_{j k} e_i$. The representation $\Complex^\infty$ will be called the tautological representation of the algebra $\gli$.

Recall that, in the case of the algebra $\gl{n}$, any irreducible polynomial representation of it was a solution to a universal problem (\ref{equation:2.8}). This observation motivates us to define irreducible polynomial representations $V^\lambda$ of the algebra $\gli$ as solutions to a similar universal problem. Namely, we will call the representation $V^\lambda$ of the algebra $\gli$ polynomial if there is a universal symmetrizing map $\Phi_{uni} : (\Complex^\infty)^{\times\lambda}\xrightarrow{\quad} V^\lambda$. That is, such a map that for any symmetrizing map $\Phi : (\Complex^\infty )^{\times\lambda}\xrightarrow{\quad} F$, there exists a single map $\varphi : V^\lambda\xrightarrow{\quad} F$, such that $\Phi = \varphi \circ \Phi_{uni}$. In other words, for any $\Phi$ and $F$ as above, there is a single map $\varphi$ that makes the following diagram commutative
\begin{equation}
    \begin{tikzcd}[column sep = 4em, row sep = 4em]
        ( \Complex^\infty )^{\times \lambda} \arrow{r}{\Phi_{uni}} \arrow[swap]{dr}{\Phi} & V^\lambda \arrow[dashrightarrow]{d}{\varphi} \\
        & F.
        \label{equation:5.2}
    \end{tikzcd}
\end{equation}

Generally speaking, we have not yet defined the action of the algebra $\gli$ on the spaces $V^\lambda$, therefore, formally speaking, we do not have the right to call them $\gli$-modules. However, despite the possible confusion, we will continue to adhere to the terminology introduced. So, having defined the representations $V^\lambda$, first of all we will show their existence and uniqueness (up to isomorphism), this will be the result of the following proposition.

\begin{proposition}
    For any highest weight $\lambda$ representations $V^\lambda$ exist and are unique up to isomorphism.
\end{proposition}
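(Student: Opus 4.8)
The plan is to treat this as a standard existence-and-uniqueness statement for an object defined by a universal property: uniqueness is obtained by a diagram chase, and existence by an explicit construction of $V^\lambda$ together with a verification that it has the required universal property of diagram~(\ref{equation:5.2}).

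For uniqueness, suppose $(V^\lambda,\Phi_{uni})$ and $(W,\Psi_{uni})$ both solve the universal problem. Since $\Psi_{uni}$ is symmetrizing, the universal property of $V^\lambda$ produces a unique linear map $\varphi:V^\lambda\to W$ with $\Psi_{uni}=\varphi\circ\Phi_{uni}$, and symmetrically a unique $\psi:W\to V^\lambda$ with $\Phi_{uni}=\psi\circ\Psi_{uni}$. Then $\psi\circ\varphi$ and $\mathrm{id}_{V^\lambda}$ both make the factoring triangle for $\Phi_{uni}$ (with target $V^\lambda$) commute, so the uniqueness clause forces $\psi\circ\varphi=\mathrm{id}_{V^\lambda}$, and likewise $\varphi\circ\psi=\mathrm{id}_W$. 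Hence any two solutions are canonically isomorphic.

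For existence, I would build $V^\lambda$ as a quotient of a free vector space, exactly as one builds a Schur functor in the finite-dimensional case. Let $M$ be the $\Complex$-vector space freely spanned by the set $(\Complex^\infty)^{\times\lambda}$ of $\lambda$-shaped tableaux filled by vectors of $\Complex^\infty$, and let $R\subseteq M$ be the subspace spanned by all instances of: the multilinearity relations in each cell; the relations $w+w'$ where $w'$ is obtained from $w$ by transposing two entries within one column; and the exchange relations $w-\sum_{w'}w'$ of~(\ref{equation:2.6}). Put $V^\lambda:=M/R$ and let $\Phi_{uni}$ be the inclusion $(\Complex^\infty)^{\times\lambda}\hookrightarrow M$ followed by the quotient map. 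By construction $\Phi_{uni}$ satisfies conditions~(\ref{equation:2.4})--(\ref{equation:2.6}), so it is symmetrizing; and given any symmetrizing $\Phi:(\Complex^\infty)^{\times\lambda}\to F$, its linear extension $\widetilde\Phi:M\to F$ annihilates $R$ precisely because $\Phi$ obeys those three conditions, hence descends to a map $\varphi:V^\lambda\to F$ with $\Phi=\varphi\circ\Phi_{uni}$, unique because the image of $\Phi_{uni}$ spans $V^\lambda$. This gives the universal property, so $V^\lambda$ exists. Alternatively, and more in the spirit of the rest of the paper, one may set $V^\lambda:=\varinjlim_n\mathbb{S}_\lambda(\Complex^n)$, using that $(\Complex^\infty)^{\times\lambda}=\varinjlim_n(\Complex^n)^{\times\lambda}$ and that each $\mathbb{S}_\lambda(\Complex^n)$ already solves the finite universal problem~(\ref{equation:2.8}), with transition maps induced by $\Complex^n\hookrightarrow\Complex^{n+1}$.

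The argument is essentially formal, so there is no single hard step; the only point that needs genuine care is the finiteness bookkeeping underlying both constructions. Each instance of the defining conditions~(\ref{equation:2.4})--(\ref{equation:2.6}), and the evaluation of any symmetrizing map on any fixed tableau, involves only finitely many cells and finitely many vectors of $\Complex^\infty$, all lying in some $\Complex^n$; one must confirm that passing from $\Complex^n$ to $\Complex^\infty$ introduces no relations beyond those already present at finite level, so that the colimit description agrees with the quotient description and the universal property indeed survives the colimit. Verifying this --- and in particular that the exchange relation~(\ref{equation:2.6}), the least symmetric of the three conditions, is transcribed into $R$ exactly as stated --- is where essentially all of the (modest) work lies.
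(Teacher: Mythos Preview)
Your argument is correct; both the uniqueness (standard Yoneda-style argument) and the existence (quotient of a generators-and-relations object) are sound and match the paper in spirit. The only structural difference is in how the quotient is organized. You form $V^\lambda$ in one step as the free vector space on $(\Complex^\infty)^{\times\lambda}$ modulo all three families of relations at once; the paper instead performs the construction in two stages, first solving the universal problem for maps satisfying only~(\ref{equation:2.4})--(\ref{equation:2.5}) by the explicit object $W^\lambda=\bigotimes_{k}\bigwedge^{\lambda_k}\Complex^\infty$ (the universal properties of tensor and exterior products do this for free), and then quotienting $W^\lambda$ by the subspace $U^\lambda$ generated by the exchange relations~(\ref{equation:2.6}). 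The payoff of the paper's route is the concrete isomorphism $V^\lambda\cong\bigl(\bigotimes_k\bigwedge^{\lambda_k}\Complex^\infty\bigr)/U^\lambda$, which immediately supplies both the $\gli$-module structure and the explicit highest weight vector~(\ref{equation:6.3}) used throughout Section~6; your free-vector-space model is equally valid but leaves those identifications implicit. Your alternative colimit description $V^\lambda=\varinjlim_n\mathbb{S}_\lambda(\Complex^n)$ is also correct and, as you note, closer to how the rest of the paper proceeds, though the paper does not invoke it in this proof.
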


\begin{proof}
    Let's fix the highest weight $\lambda = \lambda_1 \geq\ldots \geq\lambda_N \geq 0 \geq\ldots$ of the representation $V^\lambda$. Let's call a map $f$ semisymmetrizing if it satisfies only the first two conditions (\ref{equation:2.4}) and (\ref{equation:2.5}). Consider a similar universal problem with semisymmetrizing maps: if $\Phi_{uni}' : (\Complex^\infty)^{\times\lambda}\xrightarrow{\quad} W^\lambda$ is a semisymmetrizing map, such that for any semisymmetrizing map $\Phi : ( \Complex^\infty)^{\times\lambda} \xrightarrow{\quad} F$ there is a single map $\varphi' : W^\lambda\xrightarrow{\quad} F$, such that $\Phi=\varphi'\circ\Phi_{uni}'$. In other words, for any $\Phi$ and $F$ as above, there is a single map $\varphi'$ that makes the following diagram commutative
    \begin{equation}
        \begin{tikzcd}[column sep = 4em, row sep = 4em]
            ( \Complex^\infty )^{\times \lambda} \arrow{r}{\Phi_{uni}'} \arrow[swap]{dr}{\Phi} & W^\lambda \arrow[dashrightarrow]{d}{\varphi'} \\
            & F.
            \label{equation:5.3}
        \end{tikzcd}
    \end{equation}
    
    From the universal properties of the tensor product, it is obvious that the solution will be the space $W^\lambda = \bigotimes\limits_{k=1}^{N} \bigwedge\nolimits^{\lambda_k} \Complex^\infty$, and the universal map $\Phi_{uni}'$ is given by the formula
    \begin{equation}
        \Phi_{uni}':
        \begin{ytableau}
            v_1^{(1)} & v_1^{(2)} & \none[\dots] & v_1^{(N)} \\
            v_2^{(1)} & v_2^{(2)} & \none[\dots] & v_{\lambda_N}^{(N)} \\
            \none[\vdots] & \none[\vdots] \\
            v_{\lambda_1}^{(1)} & v_{\lambda_2}^{(2)} \\
        \end{ytableau}
        \xmapsto{\quad}
        \bigotimes\limits_{k = 1}^N \bigwedge\limits_{i = 1}^{\lambda_k} v_i^{(k)}.
        \label{equation:5.4}
    \end{equation}

    Let's define the subspace $U^\lambda$ of the space $W^\lambda$ as the subspace generated by all the differences $\Phi_{uni}'(w) -\sum\Phi_{uni}'(w')$, where summation is implied by those $w'\in (\Complex^\infty )^{\times\lambda}$, resulting from $w$ by exchange between two fixed columns, with the selected subset in the right of the selected columns (see \ref{equation:2.7}). As is known (see \cite{mac2013categories} Ch.3), the factorspace $W^\lambda/U^\lambda$ has the following universal property: for the projection $\pi^\lambda: W^\lambda\xrightarrow{\quad} W^\lambda/U^\lambda : w\mapsto[w]$, which sends the element $w$ into its conjugacy class $[w]$, and any linear map $\varphi' : W^\lambda\xrightarrow{\quad}F$ there is a single linear map $\varphi : W^\lambda/U^\lambda \xrightarrow{\quad} F$, such that $\varphi' = \varphi \circ \pi^\lambda$. In other words, for any $\varphi'$ and $F$ as above, there is a single $\varphi$ map that makes the following diagram commutative
    \begin{equation}
        \begin{tikzcd}[column sep = 4em, row sep = 4em]
            W^\lambda \arrow{r}{\pi^\lambda} \arrow[swap]{dr}{\varphi'} & W^\lambda/U^\lambda \arrow[dashrightarrow]{d}{\varphi} \\
            & F.
            \label{equation:5.5}
        \end{tikzcd}
    \end{equation}

    So, commutative diagrams (\ref{equation:5.3}) and (\ref{equation:5.5}) can be completed to the diagram (\ref{equation:5.2}) with symmetrizing maps. In other words, the commutativity of diagrams (\ref{equation:5.3}) and (\ref{equation:5.5}) induces the commutativity of the following diagram 
    \begin{equation}
        \begin{tikzcd}[column sep = 4em, row sep = 4em]
            ( \Complex^\infty )^{\times \lambda} \arrow{r}{\Phi_{uni}'} \arrow[swap]{dr}{\Phi} \arrow[bend left]{rr}{\Phi_{uni}} & W^\lambda \arrow{r}{\pi^\lambda} \arrow[dashrightarrow]{d}{\varphi'} & W^\lambda/U^\lambda \arrow[dashrightarrow]{dl}{\varphi} \\
            & F,
            \label{equation:5.6}
        \end{tikzcd}
    \end{equation}
    where $\Phi_{uni} = \pi^\lambda \circ \Phi_{uni}'$.

    Thus, the commutative diagram (\ref{equation:5.6}) gives a solution to the original problem, thereby proving the existence of polynomial representations $V^\lambda$. Namely, for each polynomial representation $V^\lambda$, there is an isomorphism of vector spaces
    \begin{equation}
        V^\lambda \cong \Big( \bigotimes\limits_{k=1}^{N} \bigwedge\nolimits^{\lambda_k} \Complex^\infty \Big) / U^\lambda.
        \label{equation:5.7}
    \end{equation}

    Finally, in accordance with this formula, we define the action of the algebra $\gli$ on the space $V^\lambda$, which justifies the name "polynomial representation". The action of the element $g\in\gli$ is given as the usual action of the Lie algebra on the factor space and tensor product, and the action on the space $\Complex^\infty$ is given as the action of a tautological representation. In other words, the following formula holds
    \begin{equation}
        g \cdot [~v_1^{(1)} \wedge v_2^{(1)} \wedge \ldots~] = [~g \cdot v_1^{(1)} \wedge v_2^{(1)} \wedge \ldots + v_1^{(1)} \wedge g \cdot v_2^{(1)} \wedge \ldots + \ldots~].
        \label{equation:5.8}
    \end{equation}

    Now, let us prove the uniqueness up to isomorphism. Suppose the contrary: let there exist a polynomial representation $\widetilde{V}^\lambda$ for the same highest weight $\lambda$, which is not isomorphic to $V^\lambda$. Then $\widetilde{V}^\lambda$ is also a solution to the universal problem (\ref{equation:5.2}), and for any $\widetilde{\Phi}$ and $F$, the following diagram is commutative
    \begin{equation}
        \begin{tikzcd}[column sep = 4em, row sep = 4em]
            ( \Complex^\infty )^{\times \lambda} \arrow{r}{\widetilde{\Phi}_{uni}} \arrow[swap]{dr}{\widetilde{\Phi}} & \widetilde{V}^\lambda \arrow[dashrightarrow]{d}{\widetilde{\varphi}} \\
            & F.
            \label{equation:5.9}
        \end{tikzcd}
    \end{equation}

    Let $\widetilde{\Phi} =\Phi_{uni}$ and $F=V^\lambda$, then, by the universal property of $V^\lambda$, there is a single symmetrizing map of $\varphi : V^\lambda\xrightarrow{\quad} \widetilde{V}^\lambda$, making the following diagram commutative
    \begin{equation}
        \begin{tikzcd}[column sep = 4em, row sep = 4em]
            ( \Complex^\infty )^{\times \lambda} \arrow{r}{\widetilde{\Phi}_{uni}} \arrow[swap]{dr}{\Phi_{uni}} & \widetilde{V}^\lambda \arrow[dashrightarrow, shift left=0.5ex]{d}{\widetilde{\varphi}} \\
            & V^\lambda \arrow[dashrightarrow, shift left=0.5ex]{u}{\varphi}.
            \label{equation:5.10}
        \end{tikzcd}
    \end{equation}

    Note that $\widetilde{\varphi} \circ\varphi \circ\Phi_{uni} =\widetilde{\varphi} \circ\widetilde{\Phi}_{uni} =\Phi_{uni}=\text{id}_{V^\lambda} \circ\Phi_{uni}$, but the map $\widetilde{\varphi} \circ\varphi$ is unique, which means it is equal to the identity $\widetilde{\varphi} \circ\varphi = \text{id}_{V^\lambda}$. Similar reasoning shows that $\varphi\circ\widetilde{\varphi} = \text{id}_{\widetilde{V}^\lambda}$. Thus, $\widetilde{V}^\lambda \cong V^\lambda$. We got a contradiction, which means the solution of a universal problem (\ref{equation:5.2}) is the only one up to isomorphism.
\end{proof}

Thus, all polynomial representations of the algebra $\gli$ are correctly defined. Next, it will be shown that all polynomial representations of the algebra $\gli$ are irreducible and have a highest weight vector. It will also be shown that different highest weights define different (up to isomorphism) polynomial representations.
    
        \section{The Gelfand--Tsetlin basis in irreducible polynomial representations}
        Let's proceed to the construction of the Gelfand--Tsetlin basis in irreducible $\gli$-modules. As mentioned earlier, we define the Gelfand--Tsetlin basis in the irreducible representation $V$ similarly to the finite-dimensional case. Namely, we will call the basis $\{e_T\}$ of the irreducible representation $V$ of the algebra $\gli$ the Gelfand--Tsetlin basis if the Gelfand--Tsetlin algebra $\GZi$ acts diagonally on it. Note that we do not yet know that the polynomial representations $V^\lambda$ are irreducible. To prove this fact, we will construct a certain basis in the representation $V^\lambda$, which will soon turn out to be the desired Gelfand--Tsetlin basis. However, before that, we will define infinite Gelfand--Tsetlin schemes, which will index the elements of our basis. For a fixed highest weight $\lambda = \lambda_1\geq\lambda_2\geq\ldots$, the infinite Gelfand--Tsetlin scheme is an ordered set of elements $\lambda_{i j}$
\begin{equation}
    \Lambda = \left[
    \begin{array}{ccccccccc}
    \cdots & & \cdots & & \cdots & & \cdots & & \cdots \\
    & \lambda_{n,1} & & \lambda_{n,2} & & \cdots & & \lambda_{n, n} & \\
    & & \cdots & & \cdots & & \cdots & & \\
    & & & \lambda_{2 1} & & \lambda_{2 2} & & & \\
    & & & & \lambda_{1 1} & & & & \\
    \end{array}
    \right],
    \label{equation:6.1}
\end{equation}
where for any $1\leq j\leq i\leq\infty$ the elements of $\lambda_{i j}$ are defined as the number of cells less or equal to $i$ in the $j$-th row of some semi-standard Young tableau of the form $\lambda$ (see \cite{fulton1997young}).

For the polynomial representation $V^\lambda$ with the highest weight $\lambda =\lambda_1\geq\ldots\geq\lambda_N\geq 0\geq\ldots$, using the infinite Gelfand--Tsetlin schemes defined above, we define the elements $e_\Lambda\in V^\lambda$ by the formula
\begin{equation}
    e_\Lambda = \prod\limits_{2 \leq k < \infty}^{\longrightarrow} \prod\limits_{i = 1}^{k-1} z_{k i}^{\lambda_{k i} - \lambda_{k-1, i}} \cdot v_\lambda,
    \label{equation:6.2}
\end{equation}
where the terms in the ordered product are ordered according to the increasing index $k$, and the vector $v_\lambda\in V^\lambda$ has the form
\begin{equation}
    v_\lambda = [~(e_1 \wedge e_2 \wedge \ldots \wedge e_{\lambda_1}) \otimes (e_1 \wedge e_2 \wedge \ldots \wedge e_{\lambda_2}) \otimes \ldots \otimes (e_1 \wedge e_2 \wedge \ldots \wedge e_{\lambda_N})~].
    \label{equation:6.3}
\end{equation}

Note that on the right side of the expression (\ref{equation:6.2}) only a finite number of differences $\lambda_{k i} - \lambda_{k-1, i}$ are not equal to zero. Therefore, in reality, the ordered product has only a finite number of terms, which means that the action on the vector $v_\lambda$ is defined correctly. For the elements $e_\Lambda$, we formulate and prove the following lemma.
\begin{lemma}
    For the polynomial representation $V^\lambda$, the family of vectors $\{e_\Lambda\}_{\Lambda\in\Lambda_\infty(\lambda)}$ indexed by all infinite Gelfand--Tsetlin schemes forms the basis of the vector space $V^\lambda$.
\end{lemma}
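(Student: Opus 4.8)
The plan is to realize the polynomial representation $V^\lambda$ as an increasing union of the finite-dimensional irreducible polynomial $\gl{n}$-modules $V^\lambda_n$, and to identify the family $\{e_\Lambda\}_{\Lambda\in\Lambda_\infty(\lambda)}$ with the nested union of their classical Gelfand--Tsetlin bases; the lemma then reduces to the elementary observation that a nested union of bases of an increasing chain of subspaces is a basis of the union.

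First I would set up the colimit picture. The realization~(\ref{equation:5.7}) is, factor by factor, a filtered colimit of its finite-level analogues: since $\bigwedge^{j}\Complex^\infty=\varinjlim_n\bigwedge^{j}\Complex^n$ with injective transition maps, since finite tensor products and quotients commute with filtered colimits, and since every generating relation of $U^\lambda$ uses only finitely many coordinates (so that $U^\lambda=\bigcup_nU^\lambda_n$ inside $W^\lambda=\bigcup_nW^\lambda_n$), one obtains a canonical identification $V^\lambda\cong\varinjlim_nV^\lambda_n$, where $V^\lambda_n=\big(\bigotimes_k\bigwedge^{\lambda_k}\Complex^n\big)/U^\lambda_n$ is the irreducible polynomial $\gl{n}$-module attached to $\lambda$ (it vanishes for $n$ below a threshold and is irreducible beyond it). The transition maps $V^\lambda_n\to V^\lambda_{n+1}$ are $\gl{n}$-equivariant and send the vector $v_\lambda$ of~(\ref{equation:6.3}) to $v_\lambda\neq0$; being nonzero maps out of an irreducible module they are injective, so $V^\lambda=\bigcup_nV^\lambda_n$ is an honest increasing union of $\gl{n}$-submodules and $v_\lambda\neq0$ in $V^\lambda$.

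Next I would match the index sets and the basis vectors. An infinite Gelfand--Tsetlin scheme $\Lambda\in\Lambda_\infty(\lambda)$ is the datum of a semistandard Young tableau of shape $\lambda$, which has a finite maximal entry $M$; hence $\lambda_{ij}=\lambda_j$ for all $i\geq M$, so $\Lambda$ is recovered from its truncation to the first $M$ rows, an ordinary $\gl{M}$-scheme with top row $\lambda$, that is, an element of $\Lambda_M(\lambda)$. Conversely each element of $\Lambda_n(\lambda)$ extends uniquely to an infinite scheme by setting $\lambda_{ij}=\lambda_j$ for $i\geq n$, and these extensions realize the nested inclusions $\Lambda_n(\lambda)\subseteq\Lambda_{n+1}(\lambda)$, which amount to adjoining the new top row $(\lambda_1,\dots,\lambda_n,0)$ (this row interlaces $\lambda$). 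Hence $\Lambda_\infty(\lambda)=\bigcup_n\Lambda_n(\lambda)$. Now fix $\Lambda$ with maximal entry $M$: every exponent $\lambda_{ki}-\lambda_{k-1,i}$ with $k>M$ vanishes, so the ordered product in~(\ref{equation:6.2}) collapses to $\prod_{2\leq k\leq M}^{\longrightarrow}\prod_{i}z_{ki}^{\lambda_{ki}-\lambda_{k-1,i}}$; the operators $z_{ki}$ appearing here lie in $U(\gl{M})$, the algebra $\gl{M}$ acts on the submodule $V^\lambda_M\subseteq V^\lambda$ by restriction of the $\gli$-action, and $v_\lambda$ is the highest weight vector of $V^\lambda_M$, so the right-hand side of~(\ref{equation:6.2}) is exactly the classical Gelfand--Tsetlin vector~(\ref{equation:3.3}) of $V^\lambda_M$, viewed inside $V^\lambda$. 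Therefore $\{e_\Lambda\}_{\Lambda\in\Lambda_\infty(\lambda)}=\bigcup_n\{e_\Lambda\}_{\Lambda\in\Lambda_n(\lambda)}$, a nested union.

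Finally, by the Gelfand--Tsetlin theorem recorded in~(\ref{equation:3.3}), each set $\{e_\Lambda\}_{\Lambda\in\Lambda_n(\lambda)}$ is a basis of $V^\lambda_n$; since $V^\lambda=\bigcup_nV^\lambda_n$ with these bases nested, any $v\in V^\lambda$ lies in some $V^\lambda_n$ and is a finite linear combination of the $e_\Lambda$, while any finite linear relation among the $e_\Lambda$ already holds in some $V^\lambda_n$ and is hence trivial; thus $\{e_\Lambda\}_{\Lambda\in\Lambda_\infty(\lambda)}$ is a basis of $V^\lambda$. The hard part will be the matching in the previous paragraph: confirming that the concrete vector~(\ref{equation:6.2}), defined inside the explicit model $W^\lambda/U^\lambda$, agrees term by term with the classical finite-dimensional Gelfand--Tsetlin vector --- this requires identifying $v_\lambda$ as the highest weight vector of each $V^\lambda_M$, justifying the truncation of the infinite product, and checking that the lowering operators $z_{ki}$ act compatibly along the chain $\gl{1}\hookrightarrow\gl{2}\hookrightarrow\cdots$. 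Granting that, the colimit bookkeeping in the other steps is purely formal.
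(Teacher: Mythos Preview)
Your proposal is correct and follows essentially the same approach as the paper: realize $V^\lambda$ as the increasing union $\bigcup_n V^\lambda_n$ of irreducible $\gl{n}$-modules sharing the highest weight vector $v_\lambda$, identify $\Lambda_\infty(\lambda)$ with the nested union of finite Gelfand--Tsetlin index sets, observe that the infinite product~(\ref{equation:6.2}) truncates to the classical formula~(\ref{equation:3.3}) on each piece, and conclude by the nested-basis argument. The only cosmetic difference is that the paper defines the finite pieces intrinsically as $V^\lambda_n=\mathfrak{n}^n_-\cdot v_\lambda$ via the triangular decomposition, whereas you build them externally from the model~(\ref{equation:5.7}) and argue injectivity of the transition maps from irreducibility; your treatment of these points is in fact somewhat more explicit than the paper's.
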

\begin{proof}
    Let $\mathfrak{h}^n\subset\gl{n}$ be the Cartan subalgebra, $E_{i j}$ be the matrix unit, $\text{deg} E_{i j} = j - i$ be the degree of the matrix unit and $\mathfrak{g}_q =\langle \{E_{i j}~|~\text{deg} E_{i j} = q \} \rangle$ -- span of matrix units of degree $q$ over $\Complex$. For algebra $\gl{n}$ we have a triangular decomposition $\gl{n}=\mathfrak{n}^n_-\oplus\mathfrak{h}^n\oplus\mathfrak{n}^n_+$, where $\mathfrak{n}^n_- = \bigoplus \limits_{n > q > 0} \mathfrak{g}_q$, and $\mathfrak{n}^n_+ = \bigoplus \limits_{n < q < 0} \mathfrak{g}_q$. We introduce a similar decomposition for the algebra $\gli$, namely, we define the Cartan subalgebra $\mathfrak{h}^\infty=\mathfrak{g}_0$ and the subspaces $\mathfrak{n}^\infty_-=\bigoplus\limits_{q>0}\mathfrak{g}_q$ and $\mathfrak{n}^\infty_+ = \bigoplus \limits_{q < 0} \mathfrak{g}_q$. Then there is a triangular decomposition $\gli =\mathfrak{n}^\infty_-\oplus\mathfrak{h}^\infty\oplus \mathfrak{n}^\infty_+$. For the highest weight $\lambda$, we define representations $V^\lambda_n=\mathfrak{n}^{n}_-\cdot v_\lambda$ of the algebra $\gl{n}$. Now let's go back to the diagram (\ref{equation:3.8}). Note that under the embedding $d_e \iota_{n-1}$, the subspace $\mathfrak{n}^{n-1}_-$ is embedded into $\mathfrak{n}^{n}_-$, that is, $d_e \iota_{n-1}(\mathfrak{n}^{n-1}_-) \subset \mathfrak{n}^{n}_-$. This observation induces the following diagram of embeddings
    \begin{equation}
        \begin{tikzcd}[column sep = 4em, row sep = 4em]
            0  \arrow[hook]{r}{d_e \iota_0} \arrow[d, squiggly] & \mathfrak{n}^{1}_- \arrow[hook]{r}{d_e \iota_1} \arrow[d, squiggly] & \mathfrak{n}^{2}_- \arrow[hook]{r}{d_e \iota_2} \arrow[d, squiggly] & \cdots \arrow[hook]{r}{d_e \iota_{n-1}} & \mathfrak{n}^{n}_- \arrow[hook]{r}{d_e \iota_n} \arrow[d, squiggly] & \cdots \\
            V^\lambda_0 \arrow[hook]{r}{\#\iota_0} & V^\lambda_1 \arrow[hook]{r}{\#\iota_1} & V^\lambda_2 \arrow[hook]{r}{\#\iota_2} & \cdots \arrow[hook]{r}{\#\iota_{n-1}} & V^\lambda_n \arrow[hook]{r}{\#\iota_n} & \cdots,
        \end{tikzcd}
        \label{equation:6.4}
    \end{equation}
    where $\#\iota_n$ is some embedding that sends highest weight vector $v_\lambda\in V^\lambda_n$ into the highest weight vector $v_\lambda\in V^\lambda_{n+1}$.

    The lower chain of embeddings of spaces $V^\lambda_n$ forms a directed system and has a colimit $\varinjlim V^\lambda_n= V^\lambda$. Hence, in particular, it can be seen that the polynomial representation $V^\lambda$ is generated by the action of the subspace $\mathfrak{n}^{\infty}_-$ on the vector $v_\lambda$, that is, $V^\lambda=\mathfrak{n}^{\infty}_- \cdot v_\lambda$. Let's define the degree $\text{deg}\Lambda$ of the Gelfand--Tsetlin scheme $\Lambda$ as the smallest number $N$, such that for any $k>N$ all differences $\lambda_{k i} - \lambda_{k-1, i}$ of the elements of the Gelfand--Tsetlin scheme $\Lambda$ are zero for any $i$. Let's introduce subset $\Lambda^n_\infty(\lambda)\subset\Lambda_\infty(\lambda)$ as a set consisting of all Gelfand--Tsetlin schemes of degree $n$, in other words, $\Lambda^n_\infty(\lambda) = \{ \Lambda \in \Lambda_\infty(\lambda)~|~\text{deg} \Lambda = n \}$. Then there is equality
    \begin{equation}
        \Lambda_\infty(\lambda) = \bigcup\limits_{n > 0} \Lambda^n_\infty(\lambda).
        \label{equation:6.5}
    \end{equation}

    As mentioned above, in the product (\ref{equation:6.2}) there are only a finite number of terms. More formally, if $\Lambda\in\Lambda^n_\infty(\lambda)$, then $e_\Lambda =\prod\limits_{2\leq k <\infty}^{\longrightarrow} \prod\limits_{i =1}^{k-1} z_{k i}^{\lambda_{k i} - \lambda_{k-1, i}} \cdot v_\lambda = \prod\limits_{2 \leq k \leq n}^{\longrightarrow} \prod\limits_{i = 1}^{k-1} z_{k i}^{\lambda_{k i} - \lambda_{k-1, i}} \cdot v_\lambda$. However, for all representations $V^\lambda_n$ of the algebra $\gl{n}$ set $\{e_\Lambda\}_{\Lambda\in\Lambda^n_\infty(\lambda)}$ forms the basis, and therefore the set $\{e_\Lambda\}_{\Lambda\in\Lambda_\infty(\lambda)} = \bigcup\limits_{n > 0} \{ e_\Lambda \}_{\Lambda \in \Lambda^n_\infty(\lambda)}$ forms the basis in the polynomial representation $V^\lambda$.
\end{proof}
Now we prove that the basis $\{e_\Lambda\}_{\Lambda\in\Lambda_\infty(\lambda)}$ of the polynomial representation $V^\lambda$ is the desired Gelfand--Tsetlin basis. However, before doing this, it must be shown that the polynomial representations $V^\lambda$ are irreducible and are defined correctly. This will be the result of the following lemmas.
\begin{lemma}
    The polynomial representation $V^\lambda$ of the algebra $\gli$ is an irreducible representation with the highest weight.
\end{lemma}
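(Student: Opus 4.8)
The plan is to deduce both assertions from the colimit presentation $V^\lambda=\varinjlim V^\lambda_n$ furnished by the previous lemma, together with the classical structure of the finite stages. Concretely, I would use three facts already in hand: the inclusions $V^\lambda_1\subseteq V^\lambda_2\subseteq\cdots$ inside $V^\lambda$ all fix $v_\lambda$ (diagram (\ref{equation:6.4})); the family $\{e_\Lambda\}_{\Lambda\in\Lambda_\infty(\lambda)}$ is a basis of $V^\lambda$ with $e_\Lambda\in V^\lambda_n$ whenever $\deg\Lambda\le n$; and $V^\lambda=\mathfrak{n}^\infty_-\cdot v_\lambda=U(\gli)\cdot v_\lambda$. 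Nothing beyond this and a little elementary representation theory of $\gl{n}$ is needed.

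First I would pin down that $v_\lambda$ of (\ref{equation:6.3}) is a highest weight vector for the triangular decomposition $\gli=\mathfrak{n}^\infty_-\oplus\mathfrak{h}^\infty\oplus\mathfrak{n}^\infty_+$. Since $\gli=\varinjlim\gl{n}$, any $X\in\mathfrak{n}^\infty_+$ lies in some $\mathfrak{n}^n_+$ and any $H\in\mathfrak{h}^\infty$ lies in some $\mathfrak{h}^n$; evaluating inside the stage $V^\lambda_n$, where by construction $v_\lambda$ is the highest weight vector, gives $X\cdot v_\lambda=0$ and $H\cdot v_\lambda=\lambda(H)\,v_\lambda$, so $v_\lambda$ is a highest weight vector of weight $\lambda$. (This can equally be read off directly from (\ref{equation:6.3}) and the rule $E_{ij}e_k=\delta_{jk}e_i$: a raising operator annihilates each wedge factor $e_1\wedge\cdots\wedge e_{\lambda_k}$ because it would repeat an already-present basis vector, while $E_{ii}$ acts by the number of factors containing $e_i$.) Combined with $V^\lambda=U(\gli)\cdot v_\lambda$, this already shows $V^\lambda$ is a highest weight module.

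For irreducibility I would take a nonzero $\gli$-submodule $U\subseteq V^\lambda$ and any $0\neq v\in U$. Expanding $v$ in the basis $\{e_\Lambda\}$, only finitely many schemes occur; if $n$ is the largest of their degrees then $v\in V^\lambda_n$ by the previous lemma. Now each $V^\lambda_n$, being a $\gl{n}$-submodule of $V^\lambda$ generated by the highest weight vector $v_\lambda$ and having the dimension of $V^{(\lambda_1,\ldots,\lambda_n)}$ (the Gelfand--Tsetlin basis count), is the irreducible $\gl{n}$-module with that highest weight. Hence there is $X\in U(\gl{n})$ with $X\cdot v=v_\lambda$; since $U$ is stable under $U(\gli)\supseteq U(\gl{n})$ we get $v_\lambda\in U$, and therefore $U\supseteq U(\gli)\cdot v_\lambda=V^\lambda$. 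Thus $U=V^\lambda$.

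The one point that needs care — and where the previous lemma really earns its keep — is the claim used implicitly above: that each $V^\lambda_n=U(\mathfrak{n}^n_-)\cdot v_\lambda$ is genuinely a $\gl{n}$-stable subspace of $V^\lambda$ carrying the irreducible $\gl{n}$-action, so that $U(\gl{n})\cdot v=V^\lambda_n$ holds inside $V^\lambda$, rather than merely an abstractly irreducible module mapped into $V^\lambda$. This is supplied by the realization in diagram (\ref{equation:6.4}) together with the identification of $\{e_\Lambda:\deg\Lambda\le n\}$ with the Gelfand--Tsetlin basis of $V^{(\lambda_1,\ldots,\lambda_n)}$; everything else is a formal colimit manipulation. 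Finally, the computation of the $\mathfrak{h}^\infty$-weight of $v_\lambda$ in the second step shows that this weight determines $\lambda$, so non-isomorphic highest weights yield non-isomorphic $V^\lambda$, as promised in the discussion following the preceding proposition.
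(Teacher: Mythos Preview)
Your argument is correct. The highest-weight verification is essentially the paper's own: both of you use $\mathfrak{n}^\infty_\pm=\bigcup_n\mathfrak{n}^n_\pm$ and $\mathfrak{h}^\infty=\bigcup_n\mathfrak{h}^n$ to reduce the check on $v_\lambda$ to a finite level, and both invoke $V^\lambda=\mathfrak{n}^\infty_-\cdot v_\lambda$ from the previous lemma.

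Where you diverge is in the irreducibility step. The paper introduces a positive-definite Hermitian form on $V^\lambda$ by declaring the $e_\Lambda$ orthonormal, argues that for an invariant subspace $U$ the orthogonal complement $U^\perp$ is again invariant (via $\langle g\cdot u\mid w\rangle=\langle u\mid g^\dagger w\rangle$), and concludes from $V^\lambda=U\oplus U^\perp$ that $v_\lambda$ lies in one summand, whence that summand is all of $V^\lambda$. Your route bypasses the form entirely: a nonzero $v\in U$ lies in some finite $V^\lambda_n$, which is the irreducible $\gl{n}$-module of highest weight $(\lambda_1,\dots,\lambda_n)$ sitting inside $V^\lambda$, so $U(\gl{n})\cdot v$ already contains $v_\lambda$. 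This is more elementary and more in keeping with the colimit philosophy of the paper; it also avoids a point the paper glosses over, namely that the form making the $e_\Lambda$ orthogonal is genuinely contravariant for the $\gli$-action (true, but not verified there). The paper's approach, on the other hand, yields a unitary structure on $V^\lambda$ and hence semisimplicity of any subquotient as a by-product, which your argument does not. Your closing remark about distinct highest weights giving non-isomorphic modules is the content of the \emph{next} lemma in the paper, so you may want to separate it out.
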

\begin{proof}
    To begin with, we show that the vector $v_\lambda \in V^\lambda$ is the highest weight vector. The fact that $\mathfrak{n}^{\infty}_- \cdot v_\lambda=V^\lambda$ was proved in the previous lemma. Let's show the validity of the remaining properties. Note that there is an equality $\mathfrak{n}^{\infty}_+ = \bigcup\limits_{n > 0} \mathfrak{n}^{n}_+$. But, for any $n\in\mathbb{N^*}$, the action of $\mathfrak{n}^{n}_+$ on the vector $v_\lambda$ gives a null vector, which means that $\mathfrak{n}^{\infty}_+\cdot v_\lambda = 0$. Similarly, the equality $\mathfrak{h}^{\infty} = \bigcup\limits_{n > 0} \mathfrak{h}^{n}$ holds. But, for any $n\in\mathbb{N^*}$, the action of the element $E_{ii}\in\mathfrak{h}^{n}$ on the vector $v_\lambda$ gives $\lambda_i v_\lambda$, which means $\forall E_{ii} \in \mathfrak{h}^{\infty} \implies E_{ii} \cdot v_\lambda = \lambda_i v_\lambda$. Thus, all polynomial representations are representations with the highest weight. Let's show their irreducibility. 
    
    Let's define the positive definite Hermitian form $\langle\cdot|\cdot\rangle$ on the space $V^\lambda$, declaring the basis elements $e_\Lambda$ orthogonal. Let $U \subset V^\lambda$ be a nontrivial subspace invariant with respect to the action of the algebra $\gli$. Consider the orthogonal complement $U^\perp$ with respect to the form $\langle\cdot |\cdot\rangle$. It is clear that the subspace $U^\perp$ is also invariant with respect to the action of the algebra $\gli$. Indeed, for any $g\in\gli$ it follows that $g\cdot U\subset U$. But, by definition of the orthogonal complement, it is true that $\langle U |U^\perp\rangle = 0$. Then $\langle g\cdot U|U^\perp\rangle =\langle U|g^\dagger\cdot U^\perp\rangle = 0$, where $g^\dagger$ is a Hermitian conjugate matrix. Therefore, $g^\dagger\cdot U^\perp\subset U^\perp$, which means $U^\perp$ is an invariant subspace. So, there is a decomposition into irreducible representations $V^\lambda = U \oplus U^\perp$. Let the highest weight vector $v_\lambda$ belong to the subspace $U$. The action of the algebra $\gli\cdot v_\lambda = V^\lambda$. On the other hand, $U$ is an invariant subspace, which means $\gli\cdot v_\lambda\subset U$. Therefore, then the following equality holds $U=V^\lambda$, and $U^\perp = 0$.
\end{proof}
\begin{lemma}
    Polynomial representations with different highest weights are not isomorphic.
\end{lemma}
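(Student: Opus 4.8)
The plan is to recover the highest weight $\lambda$ from the intrinsic $\gli$-module structure of $V^{\lambda}$, so that any module isomorphism is forced to preserve it. Suppose $\varphi\colon V^{\lambda}\to V^{\mu}$ is an isomorphism of $\gli$-modules. By Lemma~6.2 the module $V^{\lambda}$ is irreducible, the vector $v_{\lambda}$ satisfies $E_{ii}\cdot v_{\lambda}=\lambda_{i}v_{\lambda}$ for all $i$ and $\mathfrak{n}^{\infty}_{+}\cdot v_{\lambda}=0$, and the analogous statements hold for $V^{\mu}$ and $v_{\mu}$. Since $\varphi$ intertwines the action of every element of $\gli$, it commutes in particular with each $E_{ii}\in\mathfrak{h}^{\infty}$ and with each element of $\mathfrak{n}^{\infty}_{+}$; hence it carries $\mathfrak{h}^{\infty}$-weight vectors to weight vectors of the same weight, identifies the weight-space decompositions of $V^{\lambda}$ and $V^{\mu}$, and sends vectors killed by $\mathfrak{n}^{\infty}_{+}$ to vectors killed by $\mathfrak{n}^{\infty}_{+}$. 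In particular $\varphi(v_{\lambda})$ is a nonzero weight vector of weight $\lambda$ in $V^{\mu}$ annihilated by $\mathfrak{n}^{\infty}_{+}$.

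Next I would use that in an irreducible highest-weight module the only such singular weight vectors are the scalar multiples of the highest-weight vector. Given a weight vector $w\in V^{\mu}$ with $\mathfrak{n}^{\infty}_{+}\cdot w=0$, the triangular decomposition $\gli=\mathfrak{n}^{\infty}_{-}\oplus\mathfrak{h}^{\infty}\oplus\mathfrak{n}^{\infty}_{+}$ together with the Poincar\'e--Birkhoff--Witt theorem gives $U(\gli)\cdot w=U(\mathfrak{n}^{\infty}_{-})\cdot w$, a nonzero submodule, hence all of $V^{\mu}$ by irreducibility. But $U(\mathfrak{n}^{\infty}_{-})\cdot w$ only involves $\mathfrak{h}^{\infty}$-weights that are $\le\operatorname{wt}(w)$ in the root order, so $\mu\le\operatorname{wt}(w)$; on the other hand $\operatorname{wt}(w)$ is itself a weight of $V^{\mu}$, so $\operatorname{wt}(w)\le\mu$, and antisymmetry forces $\operatorname{wt}(w)=\mu$. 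Since the $\mu$-weight space of $V^{\mu}$ is spanned by $v_{\mu}$ alone (as $V^{\mu}=U(\mathfrak{n}^{\infty}_{-})\cdot v_{\mu}$ and $\mathfrak{n}^{\infty}_{-}$ strictly lowers weight), $w\in\mathbb{C}\,v_{\mu}$. Applying this to $w=\varphi(v_{\lambda})$ gives $\varphi(v_{\lambda})=c\,v_{\mu}$ for some $c\neq 0$, and then for each $i$, $c\,\lambda_{i}v_{\mu}=\varphi(E_{ii}\cdot v_{\lambda})=E_{ii}\cdot\varphi(v_{\lambda})=c\,\mu_{i}v_{\mu}$, whence $\lambda_{i}=\mu_{i}$. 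Thus $\lambda=\mu$, so two distinct highest weights cannot yield isomorphic polynomial representations.

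The only delicate point, hence the step I expect to require the most care, is that these are classical highest-weight arguments being applied to the colimit algebra $\gli$ and to modules $V^{\lambda}$ that need not be finite-dimensional, so I would check that the triangular decomposition, the PBW step, and the inequality ``$\operatorname{wt}(w)\le\mu$ for every weight of $V^{\mu}$'' really survive the passage to the limit. This is handled exactly as in the proof of Lemma~6.1: one has $\mathfrak{n}^{\infty}_{\pm}=\bigcup_{n}\mathfrak{n}^{n}_{\pm}$ and $\mathfrak{h}^{\infty}=\bigcup_{n}\mathfrak{h}^{n}$, and $V^{\mu}=\varinjlim V^{\mu}_{n}$ with each $V^{\mu}_{n}$ a finite-dimensional $\gl{n}$-module, so any given weight vector of $V^{\mu}$ already lives in some $V^{\mu}_{n}$, where the dominance inequality is the familiar finite-dimensional statement. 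A slightly shorter variant avoids singular vectors entirely: the set of $\mathfrak{h}^{\infty}$-weights of $V^{\lambda}$ is an isomorphism invariant, it contains $\lambda$, and by the weight-ordering argument above $\lambda$ is its unique maximal element; comparing with the same description for $V^{\mu}$ and using antisymmetry of the root order yields $\lambda=\mu$ directly.
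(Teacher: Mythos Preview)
Your proof is correct and follows essentially the same route as the paper: both argue that an intertwining isomorphism must send the highest weight vector $v_\lambda$ to (a scalar multiple of) the highest weight vector $v_{\lambda'}$, and then comparing the $E_{ii}$-eigenvalues forces $\lambda=\lambda'$. The paper simply asserts that ``with isomorphism, the highest weight vector $v_\lambda$ goes to the highest weight vector $v_{\lambda'}$'' and immediately reads off the contradiction, whereas you supply the justification this step actually needs (singular vectors in an irreducible highest-weight module are unique up to scalar), so your argument is strictly more complete than the paper's.
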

\begin{proof}
    Let's fix the polynomial representations $V^\lambda$ and $V^{\lambda'}$ such that $\lambda\neq\lambda'$. Suppose the opposite, let $\rho : V^\lambda\xrightarrow{\quad} V^{\lambda'}$ be an isomorphism of representations. With isomorphism, the highest weight vector $v_\lambda$ goes to the highest weight vector $v_{\lambda'}$. Then for any $E_{i i} \in \mathfrak{h}^\infty$ it is true that $E_{i i}\cdot v_\lambda =\lambda_i v_\lambda$. By the property of the intertwining operator $\rho$, we obtain that $\lambda_i v_\lambda = \rho (E_{i i}\cdot v_\lambda) = E_{i i} \cdot \rho (v_\lambda) = E_{i i} \cdot v_{\lambda'} = \lambda'_i \cdot v_{\lambda'}$ for any index $i$. We got a contradiction, which means that polynomial representations with different higher weights are not isomorphic.
\end{proof}

So, all polynomial representations of the algebra $\gli$ are irreducible representations with highest weight, and there is a bijection between the classes of isomorphisms of irreducible representations and the set of all highest weights. All these properties were inherited from similar finite-dimensional representations.
\begin{theorem}
    Basis $\{e_\Lambda \}_{\Lambda \in \Lambda_\infty(\lambda)}$ of the irreducible polynomial representation $V^\lambda$ is the Gelfand--Tsetlin basis.
\end{theorem}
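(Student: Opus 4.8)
The plan is to reduce the statement to the finite--dimensional spectral formula (\ref{equation:4.5}) by passing through the colimit presentation $V^\lambda=\varinjlim_n V^\lambda_n$ established in Lemma~6.1, where $V^\lambda_n$ is the irreducible $\gl{n}$--module carrying the finite Gelfand--Tsetlin basis $\{e_\Lambda\}_{\Lambda\in\Lambda^n_\infty(\lambda)}$ and having highest weight $(\lambda_1,\dots,\lambda_n)$. Recall from Section~4 that $\GZi=\varinjlim\GZ{n}$ is generated by the coefficients $a_{mi}$ of the quantum minors $A_m(u)=u^m+a_{m1}u^{m-1}+\dots+a_{mm}$, and that each $A_m(u)$ is the Capelli determinant of $\gl{m}$, hence lies in $Z(U(\gl{m}))\subset U(\gli)$. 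Therefore it suffices to prove that for every $m\ge 1$ and every scheme $\Lambda\in\Lambda_\infty(\lambda)$ the operator $A_m(u)$ acts on $e_\Lambda$ by a scalar polynomial in $u$: comparing coefficients of powers of $u$ then shows that each generator $a_{mi}$ acts diagonally in the basis $\{e_\Lambda\}_{\Lambda\in\Lambda_\infty(\lambda)}$, and hence so does all of $\GZi$, which is precisely what it means for this basis to be the Gelfand--Tsetlin basis.

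To carry this out I would fix $m$ and a scheme $\Lambda\in\Lambda_\infty(\lambda)$, set $n=\max(m,\deg\Lambda)$, and observe, as in the proof of Lemma~6.1, that the ordered product defining $e_\Lambda$ in (\ref{equation:6.2}) truncates at $k=n$, so $e_\Lambda$ lies in the image of $V^\lambda_n\hookrightarrow V^\lambda$. Since the lowering operators $z_{ki}$ with $k\le n$ are literally the same elements of $U(\gl{n})\subset U(\gli)$, the truncated product is exactly the finite--dimensional formula (\ref{equation:3.3}) for the Gelfand--Tsetlin vector of $V^\lambda_n$ attached to the finite scheme $\Lambda^{(n)}$ obtained by keeping the bottom $n$ rows of $\Lambda$; thus $e_\Lambda=e_{\Lambda^{(n)}}$ inside $V^\lambda_n$. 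Because $A_m(u)\in U(\gl{m})\subset U(\gl{n})$ and the embedding $V^\lambda_n\hookrightarrow V^\lambda$ is $\gl{n}$--equivariant, the action of $A_m(u)$ on $e_\Lambda$ may be computed inside $V^\lambda_n$, where (\ref{equation:4.5}) applies and gives
\[
A_m(u)\cdot e_\Lambda \;=\; \prod_{i=1}^{m}\bigl(u+\lambda_{mi}-i+1\bigr)\, e_\Lambda ,
\]
the crucial point being that the $(m,i)$--entry of $\Lambda^{(n)}$ is exactly the entry $\lambda_{mi}$ of $\Lambda$, so the eigenvalue does not depend on the auxiliary choice of $n$. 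This establishes the diagonal action and, with it, the theorem.

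The step I expect to be the main obstacle is the identification $e_\Lambda=e_{\Lambda^{(n)}}$ inside the colimit: one must check carefully that the embeddings $\#\iota_n$ of (\ref{equation:6.4}) are compatible with the lowering--operator construction, so that the truncated products in (\ref{equation:6.2}) genuinely are the finite Gelfand--Tsetlin vectors of the successive $V^\lambda_n$ and the scheme entries $\lambda_{mi}$ are read off consistently for all $n\ge m$; one also uses here that $\deg\Lambda$ is large enough that the $n$-th row of $\Lambda$ is $(\lambda_1,\dots,\lambda_n)$, matching the highest weight of $V^\lambda_n$. Once this compatibility is in hand, the rest is the formal transfer of (\ref{equation:4.5}) along a $\gl{n}$--equivariant map together with the fact that $\GZi$ is generated by the $a_{mi}$. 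As a side remark, the resulting eigenvalue data $\bigl(\prod_{i=1}^{m}(u+\lambda_{mi}-i+1)\bigr)_{m\ge 1}$ separate distinct schemes in $\Lambda_\infty(\lambda)$, so this basis is in fact determined up to rescaling by the spectral problem, exactly mirroring the finite--dimensional situation and justifying the definite article in ``the Gelfand--Tsetlin basis''.
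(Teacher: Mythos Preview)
Your argument is correct and follows the same strategy as the paper's proof: reduce to the finite--dimensional spectral identity (\ref{equation:4.5}) by using that $\GZi$ is generated by the coefficients of the quantum minors $A_m(u)$, and then, for a fixed $m$ and a fixed scheme $\Lambda$, work inside a sufficiently large $V^\lambda_n$ via the colimit description of Lemma~6.1. Your version is in fact more explicit than the paper's---you spell out the choice $n=\max(m,\deg\Lambda)$, the $\gl{n}$--equivariance of the embedding $V^\lambda_n\hookrightarrow V^\lambda$, and the identification of $e_\Lambda$ with the finite Gelfand--Tsetlin vector $e_{\Lambda^{(n)}}$, while the paper compresses all of this into the single observation that $\{e_\Lambda\}_{\Lambda\in\Lambda^n_\infty(\lambda)}$ are eigenvectors of $\{A_m(u)\}_{1\le m\le n}$.
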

\begin{proof}
    As mentioned earlier, the infinite Gelfand--Tsetlin algebra $\GZi$ is generated by all coefficients of all quantum minors $A_m(u)$, that is, $\GZi=\langle\{a_{m i}\}_{1\leq i\leq m\leq\infty}\rangle$. Thus, it is sufficient to show that all vectors $e_\Lambda$ are eigenvectors of quantum minors $A_m(u)$. Note that the equality $\{A_m(u)\}_{1 \leq m <\infty} = \bigcup\limits_{n>0} \{A_m(u)\}_{1\leq m\leq n}$ holds. Due to the formula (\ref{equation:4.5}), for any $n\in\mathbb{N}$, all elements of the set $\{e_\Lambda\}_{\Lambda\in \Lambda^n_\infty(\lambda)}$ are the eigenvectors of quantum minors $\{A_m(u)\}_{1\leq m\leq n}$. Therefore, the basis is $\{e_\Lambda\}_{\Lambda\in\Lambda_\infty(\lambda)}$ is the set of eigenvectors of all quantum minors $\{A_m(u)\}_{1\leq m<\infty}$, which means it is the Gelfand--Tsetlin basis.
\end{proof}

Gelfand--Tsetlin basis (\ref{equation:6.2}) has the simplest form in the fundamental representations of the algebra $\gli$. Similarly to the case of $\gl{n}$, for any polynomial representation $V^\lambda$ of the algebra $\gli$ there is an embedding
\begin{equation}
    V^\lambda \xhookrightarrow{\quad} \bigotimes\limits_{k \geq 0} \Sym^{a_k} \bigwedge\nolimits^{k} \Complex^\infty,
    \label{equation:6.6}
\end{equation}
where almost all the numbers $a_k\in \mathbb{N}$ are zeros.

So, for the fundamental representation $\bigwedge\nolimits^{k}\Complex^\infty$, the Gelfand--Tsetlin basis has the form
\begin{equation}
    \{ e_\Lambda \}_{\Lambda \in \Lambda_\infty(\lambda)} = \{ e_{i_1} \wedge e_{i_2} \wedge \ldots \wedge e_{i_k}\}_{1 \leq i_1 \leq i_2 \leq \ldots \leq i_k < \infty}.
    \label{equation:6.7}
\end{equation}

As noted earlier, all representations of the algebra $\gli$ are also representations of the group $\GLi$. Thus, the basis (\ref{equation:6.2}) is the Gelfand--Tsetlin basis of all polynomial representations $V^\lambda$ of the group $\GLi$. As can be seen from the above, all the ideas of constructing the Gelfand--Tsetlin basis in irreducible $\gli$ modules were based on reducing the infinite-dimensional case to a finite-dimensional one. These ideas can be applied to construct the Gelfand--Tsetlin basis in irreducible representations of the colimits of all classical Lie algebras $A_\infty, B_\infty, C_\infty, D_\infty$ and other reductive algebras.

        \bibliographystyle{alpha}
        \bibliography{bibliography}
        \hspace{1cm}
    \end{sloppypar}
\end{document}